\newcommand{\NN}{\mathbb N}
\newcommand{\RR}{\mathbb R}
\newcommand{\CI}{{\mathcal C}^{\infty}}
\newcommand\pa{{\partial}}
\newcommand{\maC}{\mathcal C}
\newcommand{\maD}{\mathcal D}
\newcommand{\maK}{\mathcal K}
\newcommand{\maL}{\mathcal L}
\newcommand{\maR}{\mathcal R}
\newcommand{\maV}{\mathcal V}
\newcommand{\maW}{\mathcal W}
\newcommand{\ie}{{\em i.\thinspace e.,\ }}
\newcommand\comm[1]{{}}
\newcommand{\mcb}{\maC_b}
\newcommand\famP{$(P_y)_{y \in U}$}
\begin{document}

\mainmatter  

\title{Uniform shift estimates for transmission problems and optimal
  rates of convergence for the parametric Finite Element Method}

\titlerunning{Uniform Shift Estimates For Parametric Transmission Problems}

%
%
\author{Hengguang Li \inst{1} %
\and Victor Nistor \inst{2} \and Yu Qiao\inst{3}
\thanks{H. Li was partially supported by the NSF Grant DMS-1158839. V. Nistor was partially supported by the NSF Grant DMS-1016556.}}
\institute{Springer-Verlag, Computer Science Editorial,\\
Tiergartenstr. 17, 69121 Heidelberg, Germany\\
\mailsa\\}

\institute{Department of Mathematics, Wayne State University, Detroit, MI 48202, USA\\
\mailsa
\and
Department of Mathematics, Pennsylvania State University\\
University Park, PA, 16802, USA\\
Inst. Math. Romanian Acad. \\
PO BOX 1-764, 014700, Bucharest, Romania\\
\mailsb
\and
College of Mathematics and Information Science\\
Shaanxi Normal University, Xi'an, Shaanxi, 710062, P.R.China\\
\mailsc}

\authorrunning{Hengguang Li, Victor Nistor, and Yu Qiao}


%
%


\maketitle

\begin{abstract}
Let $\Omega \subset \RR^d$, $d \geqslant 1$, be a bounded domain with
piecewise smooth boundary $\partial \Omega $ and let $U$ be an open
subset of a Banach space $Y$. Motivated by questions in ``Uncertainty
Quantification,'' we consider a parametric family $P = (P_y)_{y \in
  U}$ of uniformly strongly elliptic, second order partial
differential operators $P_y$ on $\Omega$. We allow jump
discontinuities in the coefficients. We establish a regularity result
for the solution $u: \Omega \times U \to \RR$ of the parametric,
elliptic boundary value/transmission problem $P_y u_y = f_y$, $y \in
U$, with mixed Dirichlet-Neumann boundary conditions in the case when
the boundary and the interface are smooth and in the general case for
$d=2$. Our regularity and well-posedness results are formulated in a
scale of broken weighted Sobolev spaces $\hat
\maK^{m+1}_{a+1}(\Omega)$ of Babu\v{s}ka-Kondrat'ev type in $\Omega$,
possibly augmented by some locally constant functions. This implies
that the parametric, elliptic PDEs $(P_y)_{y \in U}$ admit a shift
theorem that is uniform in the parameter $y\in U$. In turn, this then
leads to $h^m$-quasi-optimal rates of convergence (\ie algebraic
orders of convergence) for the Galerkin approximations of the solution
$u$, where the approximation spaces are defined using the ``polynomial
chaos expansion'' of $u$ with respect to a suitable family of
tensorized Lagrange polynomials, following the method developed
by Cohen, Devore, and Schwab (2010).\end{abstract}

\section{Introduction}

Recently, questions related to differential equations with random
coefficients have received a lot of attention due to the practical
applications of these problems
\cite{BabuskaTempone2,Schwab2,Karniadakis1,SchwabTodor1,Karniadakis2}. Our
paper is motivated by the approach in \cite{Schwab1,NS1}, where
families of differential operators on polyhedral domains indexed by $y
\in U$, were studied.  As in those papers, $U$ is an open subset of a
Banach space $Y$, which allows us to study the analyticity of the
solution in terms of $y \in U$.

We here study the properties of solutions to a family of strongly
elliptic, mixed boundary value/transmission problems
\begin{equation}\label{eq.One}
  P_y u_y(x) = Pu(x,y) = f_y(x) = f(x,y), \quad x \in \Omega, \ y
  \in U
\end{equation}
on a domain $\Omega \subset \RR^d$, $d \ge 1$. The domain $\Omega $ is
assumed to be piecewise smooth and bounded. Thus, for each $y \in U$,
we are given a second order, uniformly strongly positive, parametric
partial differential operator $P_y$ on $\Omega $ whose coefficients
are functions of $(x,y) \in \Omega \times U$ and are allowed to have
jump discontinuities across a fixed interface $\Gamma$. More
precisely, we assume that $\overline{\Omega} = \cup_{k=1}^K
\overline{\Omega}_k$, where $\Omega_k$ are disjoint domains with
piecewise smooth boundaries and $\Gamma := \big( \cup_{k=1}^K \pa
\Omega_k \big) \smallsetminus \pa \Omega$.

Under suitable regularity assumptions on the coefficients of $P$ and
on the source term $f: \Omega \times U \to \RR$, we establish in
Section \ref{sec.fam.reg} a regularity and well-posedness result for
the solution $u: \Omega \times U \to \RR$ of the parametric, elliptic
boundary value/transmission problem (\ref{eq.One}) with mixed
Dirichlet-Neumann boundary conditions. Our regularity result is
formulated in a scale of broken weighted Sobolev spaces $\hat
\maK^{m+1}_{a+1}(\Omega) = \oplus_{k=1}^K \maK_{a+1}^{m+1}(\Omega_k)$
of Babu\v{s}ka-Kondrat'ev type in $\Omega$, for which we prove that
our elliptic PDEs $(P_y)_{y \in U}$ admit a shift theorem that is
uniform in the parameter $y \in U$. We deal completely in this paper
with the cases when the boundary $\pa \Omega$ and the interface
$\Gamma$ are smooth and disjoint. We also indicate how to proceed in
the general case for $d=2$. Our results generalize the results of
\cite{NS1} by allowing jump discontinuities in the coefficients and by
allowing adjacent edges to be endowed with Neumann-Neumann boundary
conditions. We will be therefore brief in our presentation, referring
to \cite{NS1}, as well as \cite{Schwab1,Schwab2} for more details.

The main contribution of this paper is to study the regularity of the
solution of a (non-parametric) transmission/boundary value problem
with rather weak smoothness assumptions on the coefficients. As far as
we know, this paper is the only place where a complete proof for the
regularity of transmission problems is given, even in the case of
smooth coefficients. The results are general enough so that one can
use the approach in \cite{Schwab1,NS1} to obtain regularity results
for families and then to obtain optimal rates of convergence for the
Galerkin method. An abstract version of this method is explained in
\cite{BacutaNistorParam}. These issues will be discussed in more
detail in a forthcoming paper.

The paper is organized as follows. In Section \ref{sec.first} we
formulate our parametric partial differential boundary
value/transmission problem and introduce some of our main
assumptions. We also discuss the needed notions of positivity for
families of operators and derive some simple consequences. In Section
\ref{sec.two}, we review the ``broken" version of usual Sobolev
spaces, and then formulate and prove the main results, Theorem
\ref{thm.smooth}, which is a regularity and well-posedness result for
non-parametric solutions in smooth case.  In Section \ref{sec.three},
we recapitulate regularity and well-posedness results for the
non-parametric, elliptic problem from \cite{BNZ1,BNZ3D2,HMN}, the main
result being Theorem \ref{thm.one.op}. This theorem is then
generalized to families in Section \ref{sec.fam.reg}, thus yielding
our main regularity and well-posendess result for parametric families
of uniformly strongly boundary value/transmission problems, namely
Theorem \ref{thm.fam.op}. As mentioned above, this result is
formulated in broken weighted Sobolev spaces (the so called
``Babu\v{s}ka-Kondrat'ev'' spaces).  As in \cite{Schwab1,Schwab2,NS1}
these results lead to $h^m$-quasi-optimal rates of convergence for a
suitable Galerkin method for the approximation of our parametric
solution $u$.

\emph{Acknowledgements.}\ VN acknowledges the support of the Hausdorff
Institute for Mathematics (HIM) in Bonn during the HIM Trimester
``High dimensional approximation,'' where this work was initiated.  We
also thank Christoph Schwab for several useful discussions.

\section{Ellipticity, positivity, solvability for parametric families}
\label{sec.first}
We now formulate our parametric partial differential boundary
value/transmission problem and introduce some of our main assumptions.

\subsection{Notation and assumptions\label{ssec:Notation}}
By $\Omega \subset \RR^d$, $d \ge 1$, we shall denote a connected,
bounded piecewise smooth domain, which we assume is decomposed into
finitely many subdomains $\Omega_k$ with piecewise smooth boundary,
$\overline{\Omega} = \cup_{k=1}^K \overline{\Omega}_k$. We obtain
results on the spatial regularity of PDEs whose data depend on a
parameter vector $y \in U \subset Y$, where $U$ is an open subset of a
Banach space $Y$. By $a^{ij}_{pq}, b^{i}_{pq}, c_{pq} : \Omega \times
U \to \RR$, $1 \le i,j \le d$, we shall denote bounded, measurable
functions satisfying smoothness and other assumptions to be made
precise later. We denote by $A = (a^{ij}_{pq}, b^{i}_{pq},
c_{pq})$. Let us denote by $\pa_i = \frac{\pa}{\pa x_i}$, $i=1,
\ldots, d$. We shall then denote by $P^A = [P^A_{pq}]$ a $\mu \times
\mu$ matrix of parametric differential operators in divergence form
\begin{eqnarray}\label{eq.action}
  P_{pq}^A u(x, y) := \left(- \displaystyle{\sum_{i,j=1}^d} \pa_i \big(
  a^{ij}_{pq}(x,y) \pa_j \big)
  + \displaystyle{\sum_{i=1}^d} b^i_{pq}(x,y) \pa_i  +
  c_{pq}(x, y)\right) u(x, y),
\end{eqnarray}
where $x \in \Omega $ and $y \in U$. Note that the derivatives act
only in the $x$-direction and $y$ is just a parameter. The matrix case
is needed in order to handle the case of systems, such as that of
(anisotropic) linear elasticity.

A matrix $P = [P_{pq}]_{p,q=1}^\mu$ of differential operators acts on
vector-valued functions $u=(u_q)_{q=1}^\mu$ in the usual way $(Pu)_p =
\sum_{q=1}^{\mu} P_{pq} u_q,$ for $u = (u_q) \in \CI(\Omega \times
U)^\mu.$ We recall that $H^{-1}(\Omega )$ is defined as the dual of
$H^1_0(\Omega ) := \{u \in H^1(\Omega ), u\vert_{\pa \Omega } = 0\}$
with pivot $L^2(\Omega )$. Occasionally, we shall need to specialize a
family $P$ for a particular value of $y$, in which case we shall write
$P_y : \CI(\Omega )^\mu \to H^{-1}(\Omega )^\mu$ for the induced
operator. We emphasize that we allow $P$ to have non-smooth
coefficients, so that $P u$ may be non-smooth in general.

\subsection{Boundary and interface conditions}
\label{ssec:BounCond}
We impose mixed Dirichlet and Neumann boundary conditions. To this
end, we assume there is given a closed set $\pa_D \Omega \subset \pa
\Omega$, which is a union of polygonal subsets of the boundary and we
let $\pa_N \Omega := \pa \Omega \smallsetminus \pa_D \Omega$. The set
$\pa_D \Omega$ will be referred to as ``Dirichlet boundary'' and
$\pa_N \Omega$ as ``Neumann boundary,'' according to the type of
boundary conditions that we associate to these parts of the
boundary. The case of cracks is also allowed, provided that one treats
different sides of the crack as {\em different} parts of the boundary,
as in \cite{HMN}, for instance, but we choose not to treat this case
explicitly in this paper. We then define the {\em conormal
  derivatives}
\begin{equation}\label{eq.conormal}
  (\nabla^{A}_{\nu} u)_{p}(x, y) = \sum_{q=1}^{\mu} \sum_{i,j=1}^{d}
  \nu_i a^{ij}_{pq}(x, y) \pa_j u_q(x, y), \quad x \in \pa_N \Omega
  ,\;y\in U,
\end{equation}
where $\nu = (\nu_i)$ is the outward unit normal vector at $x\in \pa_N
\Omega $. The conormal derivatives $\nabla^{A}_{\nu} u^{\pm}$ at
  the interface $\Gamma$ are defined similarly, using an arbitrary but
  fixed labeling of the two sides of the interface into a positive and
  a negative part.

We shall also need the spaces $H^1_{D}(\Omega )$ and
$H^{-1}_{D}(\Omega )$ for vector-valued functions: $H_{D}^{1}(\Omega)
:= \{ u \in H^{1}(\Omega )^\mu ,\, u = 0\, \mbox{ on }\, \pa_D \Omega
\}$ and $H^{-1}_{D}(\Omega )$, defined to be the dual of
$H^1_{D}(\Omega )$ with pivot space $L^2(\Omega )$. Note that we
assume here as in \cite{NS1} that we have {\em the same type of
  boundary conditions for all components $u_q$ of the solution vector
  $u$.}

Recall that our domain $\Omega$ is decomposed into $K$ subdomains of
the same type (with piecewise smooth boundary), $\overline{\Omega} =
\cup_{k=1}^K \overline{\Omega}_k$. We then denote by $\Gamma := \big (
\cup_{k=1}^K \pa \Omega_j \big) \smallsetminus \pa \Omega$ the
interface of our problem. We also fix arbitrarily the sides of the
interface, and we thus denote by $u^{+}$, respectively $u^{-}$ the
non-tangential limits of $u$ at the two sides of the interface. We
define similarly the conormal derivatives $\nabla^{A}_{\nu}u^{+}$ and
$\nabla^{A}_{\nu}u^{-}$ at the interface, but using the two sided
limits of the coefficients $a_{pq}^{ij}$ at $\Gamma$. We consider the
parametric family of boundary value/interface problems
\begin{equation}\label{eq.pBVP}
\left\{\begin{array}{ll}
   \ \ P^A u(x, y) = f(x, y) & \quad x \in \Omega , \\
   \ \  u(x,y) = 0 & \quad x \in \pa_D \Omega ,\\
   \ \ \nabla^A_{\nu} u(x,y) = g(x,y) & \quad x \in \pa_N \Omega \\
   \ \ u^{+}(x, y) - u^{-}(x, y) = 0 & \quad x \in \Gamma \\
   \ \ \nabla^{A}u^{+}(x, y) - \nabla^{A} u^{-}(x, y) = h(x, y) &
   \quad x \in \Gamma \\
\end{array}\right.
\end{equation}
where $P^A$ is as in Equation (\ref{eq.action}), $\nabla^{A}_{\nu}$ is
as in Equation (\ref{eq.conormal}), and $y \in U$. We stress
  that for us the dependence of $P^A$ on its coefficients, that is on
  $A$, is important, which justifies our notation.

\subsection{Ellipticity and positivity for differential 
operators\label{ssec:DOPs}}
In this subsection we recall the definition of the positivity property
for parametric families of differential operators.  Let us therefore
consider, for any $y \in U$, the parametric bilinear form $B(y;\,
\cdot \, ,\, \cdot \, )$ defined by
\begin{eqnarray}
  &B(y;v,w) \, := \, \int_{x\in \Omega } \, \sum_{p, q = 1}^\mu \Big(
  \sum_{i,j=1}^{d} a^{ij}_{pq}(x,y) \partial_i v_p(x, y) \partial_j
  w_q(x, y)  \nonumber\\
  &\sum_{i=1}^{d} b^{i}_{pq}(x,y) \partial_i v_p(x, y) w_q(x, y) +
  c_{pq}(x,y) v_p(x, y) w_q(x, y) \Big) dx  \, , \quad y\in U. \label{eq:defby}
\end{eqnarray}

\begin{definition}\label{def.u.positive}
The family $(P_y)_{y \in U}$ is called {\em uniformly strictly
  positive definite} on $H^1_0(\Omega)^\mu \subset V \subset
H^1(\Omega )^\mu$ if the coefficients $a^{ij}_{pq}$ are symmetric in
$i,j$ and in $p,q$ (that is, $a^{ij}_{pq} = a^{ji}_{pq} =
a^{ij}_{qp}$, for all $i$, $j$, $p$, and $q$), and if there exist $0 <
r < R < \infty$ such that for all $y \in U$, and $v,w \in V$, we have
\begin{eqnarray*}
  |B(y; v, w)| \le R \| v\|_{H^{1}(\Omega )} \| w\|_{H^{1}(\Omega )}
  \quad {\mbox{and}} \quad 
  r \|v\|_{H^{1}(\Omega )}^2 \, \le \, B(y; v, v).
\end{eqnarray*}
\end{definition}

If $U$ is reduced to a single point, that is, if we deal with the case
of a single operator instead of a family, then we say that $P$ is
strictly positive definite. Throughout this paper, we shall assume
that {\em $(P_y)_{y \in U}$ is uniformly strictly positive
  definite.} Positivity is closely related to ellipticity.

\begin{definition}\label{def.u.s.elliptic}
The family $(P_y)_{y \in U}$ is called {\em uniformly strongly
elliptic} if the coefficients $a^{ij}_{pq}$ are symmetric in $i,j$
and in $p,q$ and if there exist $0 < r_e < R_e < \infty$ such that
for all $x \in D$, $y \in U$, $\xi \in \RR^d$, and $\eta \in \RR^\mu$
\begin{eqnarray}\label{eq.ineq.se}
  r_e |\xi|^2|\eta|^2 \, \le \, \sum_{p, q=1}^\mu \sum_{i,j=1}^d
  a^{ij}_{pq}(x, y) \xi_i \xi_j \eta_p \eta_q
  \le \, R_e |\xi|^2|\eta|^2 \;.
\end{eqnarray}
\end{definition}

In case one is interested only in scalar equations (not in systems),
then for $V = H^1_{D}(\Omega)$, the assumption that our family $P_y$
is uniformly positive definite can be replaced with the (slightly
weaker) assumption that the family $P_y$ is uniformly strongly
elliptic, that $\sum_{i=1}^d \pa_i b^i = 0$ in $\Omega$, $\sum_{i=1}^d
\nu_i b^i = 0$ on $\pa_N \Omega$, $c \ge 0$, and $\pa_D \Omega \neq
\emptyset$ (in which case it also follows that $\pa_D \Omega$ has a
non-empty measure). In general, a uniformly strictly positive family
$P$ will also be uniformly strongly elliptic.

\subsection{Consequences of positivity}
\label{ssec:ConseqPos}
The usual Lax-Milgram lemma gives the following result as in
\cite{Schwab1}. Recall the constant $r$ from Definition
\ref{def.u.positive}.

\begin{proposition} \label{prop.existence}
Assume that $f_y := f(\,\cdot \, , y) \in H_{D}^{-1}(\Omega)$, for any
$y \in U$. Also, assume that the family $P$ is uniformly strictly
positive definite. Then our family of boundary value problems $P_y u_y
= f_y$, $u_y \in H^1_{D}(\Omega )$, \ie Equation (\ref{eq.pBVP}),
admits a unique solution $u_y = P_y^{-1} f_y$. Moreover,
$\|P_y^{-1}\|_{\maL(H_{D}^{-1}; H_{D}^1)} \le r^{-1},$ for all $y\in
U.$
\end{proposition}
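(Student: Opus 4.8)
The plan is to apply the Lax--Milgram lemma to the parametric bilinear form $B(y;\,\cdot\,,\,\cdot\,)$ of Equation~(\ref{eq:defby}) on the Hilbert space $V := H^1_D(\Omega)^\mu$, for each fixed $y \in U$, and then to track the constants to obtain the uniform bound. First I would record that, since $(P_y)_{y\in U}$ is assumed uniformly strictly positive definite in the sense of Definition~\ref{def.u.positive} with $V = H^1_D(\Omega)^\mu$ (which indeed satisfies $H^1_0(\Omega)^\mu \subset V \subset H^1(\Omega)^\mu$), the form $B(y;\,\cdot\,,\,\cdot\,)$ is, for every $y\in U$, bounded with constant $R$ and coercive with constant $r$ on $V$, with $0 < r < R < \infty$ independent of $y$. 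This is exactly the pair of inequalities in Definition~\ref{def.u.positive}. Next I would check that the right-hand side defines a bounded linear functional: for $f_y \in H^{-1}_D(\Omega)^\mu = (H^1_D(\Omega)^\mu)^*$, the map $w \mapsto \langle f_y, w\rangle$ is continuous on $V$ with norm $\|f_y\|_{H^{-1}_D}$. (Here I should note that the boundary and interface data $g$ and $h$ in~(\ref{eq.pBVP}) are incorporated into the functional $f_y \in H^{-1}_D(\Omega)^\mu$ via the weak formulation, using integration by parts and the conormal derivatives~(\ref{eq.conormal}); this is the standard identification of the mixed boundary value/transmission problem~(\ref{eq.pBVP}) with the variational problem $B(y;u_y,w) = \langle f_y, w\rangle$ for all $w \in V$.)

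With these two facts, the Lax--Milgram lemma yields, for each $y\in U$, a unique $u_y \in V$ with $B(y;u_y,w) = \langle f_y,w\rangle$ for all $w\in V$, i.e.\ a unique solution $u_y = P_y^{-1}f_y$ of~(\ref{eq.pBVP}). For the norm estimate I would simply test the variational identity against $w = u_y$ and use coercivity:
\begin{equation*}
  r\,\|u_y\|_{H^1(\Omega)}^2 \;\le\; B(y;u_y,u_y) \;=\; \langle f_y, u_y\rangle \;\le\; \|f_y\|_{H^{-1}_D(\Omega)}\,\|u_y\|_{H^1(\Omega)},
\end{equation*}
so that $\|u_y\|_{H^1(\Omega)} \le r^{-1}\|f_y\|_{H^{-1}_D(\Omega)}$, and hence $\|P_y^{-1}\|_{\maL(H^{-1}_D;\,H^1_D)} \le r^{-1}$. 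Since $r$ does not depend on $y$, this bound is uniform in $y \in U$, which is the assertion.

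I do not expect any serious obstacle here: the result is a direct, essentially formal consequence of uniform positivity via Lax--Milgram, and the only point requiring a modicum of care is the identification of the classical problem~(\ref{eq.pBVP}) (with its Neumann and interface transmission conditions) with the variational problem on $V = H^1_D(\Omega)^\mu$ — that is, verifying that a variational solution $u_y$ has the conormal derivative $\nabla^A_\nu u_y = g$ on $\pa_N\Omega$ and the jump $\nabla^A u_y^+ - \nabla^A u_y^- = h$ on $\Gamma$ in the appropriate (distributional/trace) sense. Since the paper explicitly cites~\cite{Schwab1} for this proposition and promises to be brief, I would simply invoke the standard weak formulation and the Lax--Milgram lemma, and refer to~\cite{Schwab1} for the details of the equivalence.
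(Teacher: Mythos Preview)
Your proposal is correct and matches the paper's approach exactly: the paper simply states that ``the usual Lax--Milgram lemma gives the following result as in~\cite{Schwab1}'' without further detail, and your argument is precisely the standard Lax--Milgram application (boundedness and coercivity from Definition~\ref{def.u.positive}, then test against $w=u_y$) that this sentence abbreviates. Your remarks on the equivalence between the variational formulation and the classical problem~(\ref{eq.pBVP}) are appropriate and, as you anticipate, are handled in the paper by the weak formulation~(\ref{eq.weak}) and the reference to~\cite{Schwab1}.
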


The parametric solution $u_y \in H^1_{D}(\Omega )$ of Proposition
\ref{prop.existence} is then obtained from the usual weak
formulation:\ given $y \in U$, find $u_y \in V := H^1_{D}(\Omega )$
such that
\begin{equation}\label{eq.weak}
  B(y; u_y, w) = (f_y,w) + \int_{\pa_N \Omega} g_yw dS + \int_{\Gamma}
  h_yw dS , \quad \forall w \in V,
\end{equation}
where $(f_y, w)$ denotes the $L^2(\Omega)$ inner product and 
$dS$ is the surface measure on $\pa \Omega$ or on $\Gamma$. Also, $f_y(x) = f(x,y)$,
and similarly for $u_y$, $g_y$, and $h_y$.

\section{Broken Sobolev spaces and higher regularity of 
non-parametric solutions in the smooth case\label{sec.two}}

One of our main goals is to obtain regularity of the solution $u$ both
in the space variable $x$ and in the parameter $y$. It is convenient
to split this problem into two parts: regularity in $x$ and regularity
in $y$. We first address regularity in $x$ in the case when the
boundary $\pa \Omega$ and the interface $\Gamma$ are smooth and
disjoint.  We also assume that each connected component of the
boundary is given a single type of boundary conditions: either
Dirichlet or Neumann. This leads to Theorem \ref{thm.smooth}, which
states the regularity and well-posedness of Problem (\ref{eq.pBVP}) in
this smooth case ($\pa \Omega$ and $\Gamma$ smooth and disjoint). This
is the main result of this paper, and, as far as we know, no complete
proof was given before. We also consider coefficients with lower
regularity than it is usually assumed, which is needed to treat the
truly parametric case. (We are planning to address this question in a
future paper.)

{\em We assume throughout this and the following section that we are
  dealing with a single, non-parametric equation (not with a family),
  that is, that $U$ is reduced to a single point in this
  subsection. We also assume that $\mu =1$, to simplify the notation.}

We shall need the ``broken'' version of the usual Sobolev spaces to
deal with our interface problem. Recall the subdomains
$\Omega_k\subset \Omega$, $1\leq k\leq K$, we  define:
\begin{eqnarray}
 & \hat H^{m} (\Omega ) := \{ v: \Omega \to \RR, v \in H^m(\Omega_k),
  \ \forall 1 \leq k \leq K \} \label{eq.def.broken} \\
  & \hat W^{m, \infty} (\Omega ) := \{ v: \Omega \to \RR, \, \pa^\alpha
  v \in L^{\infty}(\Omega_k),\ \forall 1 \leq k \leq K, |\alpha| \le m
  \}.  \nonumber
\end{eqnarray}
For further reference we note that the definitions of these spaces
imply that the multiplication and differentiation maps $\hat W^{m,
  \infty} (\Omega ) \times \hat H^{m} (\Omega ) \to \hat H^{m} (\Omega
)$ and $\quad \pa_i : \hat H^{m} (\Omega ) \to \hat H^{m-1}(\Omega )$
are continuous.

One of the difficulties of dealing with interface problems is the more
complicated structure of the domains and ranges of our operators.
When $m=0$, we define $\maD_{m} = \maD_{0} = H^1_D(\Omega) = V$ and
$\maR_{m} = \maR_{0} = H^{-1}_D(\Omega) = V^*.$ Then we define $\tilde
P^{A}$ in a weak sense using the bilinear form $B$ introduced in
Equation (\ref{eq:defby}) (see the discussion around Equation (2.12)
in \cite{HMN} for more details or the discussion around Equation (20)
in \cite{MN2}). Assume now that $m \ge 1$.
We then define 
\begin{eqnarray*}
 & \maD_{m} := \hat H^{m+1}(\Omega ) \cap \{u=0
   {\mbox{ on }} \pa_D \Omega \} \cap  \{u^+ - u^{-} = 0 \mbox{ on
    } \Gamma \}  \ \mbox{ and } \\
 & \maR_{m} := \hat H^{m-1}(\Omega) \oplus H^{m-1/2}(\pa_N \Omega)
  \oplus H^{m-1/2}(\Gamma).
\end{eqnarray*}
In particular, $\maD_{m} = \hat H^{m+1}(\Omega ) \cap
H^1_D(\Omega)$. Let $A = (a^{ij}, b^{i}, c) \in \hat
W^{m,\infty}(\Omega)^{d^2 + d + 1}$ and $P^{A} u = \sum_{i,j=1}^{2}
\pa_i(a^{ij} \pa_j u) + \sum_{i=1}^{2} b^{i} \pa_i u + c u$, as
before. Then the family of partial differential operators $\tilde
P^A_{m} : \maD_{m} \to \maR_{m},$
\begin{equation}
%
  \tilde P^A_{m} u = \big (P^A u, \nabla^A_{\nu} u\vert_{\pa_N
    \Omega}, (\nabla^A_{\nu} u^{+}- \nabla^A_{\nu} u^{+})
    \vert_{\Gamma} \big)
\end{equation}
is well defined. Note that the domain $\maD_{m}$ and codomain
$\maR_{m}$ are independent of $y \in U$, which justifies why we do not
consider homogeneous Neumann boundary conditions. We are now ready to
state and prove our main theorem. Let us denote $\|u\|_{\hat
  H^m(\Omega )} := \Big (\sum_{k=1}^K \|u\|_{H^m(\Omega_k)^2}
\Big)^{1/2}$ and $\|v\|_{\hat W^{m, \infty}(\Omega)} := \sum_{k=1}^K
\|v\|_{W^{m, \infty}(\Omega_k)}$ the resulting natural norms on the
spaces introduced in Equation (\ref{eq.def.broken}).

\begin{theorem}\label{thm.smooth} 
Let us assume that $\Omega \subset \RR^d$ is smooth and bounded, that
the interface $\Gamma$ is smooth and does not intersect the boundary,
and that to each component of the boundary it is associated a single
type of boundary conditions (either Dirichlet or Neumann).  Assume
that $A = (a^{ij}, b^i, c) \in \hat W^{m, \infty}(\Omega)^{d^2 + d
  +1}$ and that $P^A$ is strictly positive definite on $H^1_{D}(\Omega
)$, then $\tilde P^A_{m} : \maD_{m} \to \maR_{m}$ is invertible.

Moreover, let $\|P^{-1}\|$ denote norm of the
inverse of the map $P : H_{D}^1(\Omega ) \to H_{D}^1(\Omega )^* =:
H_{D}^{-1}(\Omega )$. Then there exists a constant $\tilde C_1 > 0$
such that the solution $u$ of (\ref{eq.pBVP}) satisfies
\begin{equation}
 \|u\|_{\hat H^{m+1}(\Omega )} + \|u \|_{H^{1}(\Omega )} \le
 \tilde C_1 \big ( \|f\|_{\hat H^{m-1}(\Omega )} \nonumber 
 \|g\|_{H^{m-\frac{1}{2}}(\pa_N \Omega)} +
 \|h\|_{H^{m-\frac{1}{2}}(\Gamma)} \big),\label{eq.lemma.D.smooth}
\end{equation}
with the constant $\tilde C_1=\tilde C_1(m, \|P^{-1}\|,
\|A\|_{\hat W^{m,\infty}})$.
\end{theorem}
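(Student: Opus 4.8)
The plan is to prove Theorem~\ref{thm.smooth} by a standard
three-step bootstrap argument: (i) establish well-posedness in the
energy space via Lax--Milgram, (ii) prove interior and boundary
elliptic regularity estimates using difference quotients and a
partition of unity that straightens the boundary and the interface,
and (iii) iterate to climb the Sobolev scale, tracking the dependence
of the constants. Step (i) is immediate from
Proposition~\ref{prop.existence} applied to the single operator
$P=P^A$ (so $U$ is a point and $\mu=1$): strict positivity gives a
unique $u\in H^1_D(\Omega)$ with $\|u\|_{H^1(\Omega)}\le
\|P^{-1}\|\,\|F\|$, where $F$ is the functional on $V$ assembled from
$f$, $g$ and $h$ via the weak formulation~(\ref{eq.weak}), and one
bounds $\|F\|_{H^{-1}_D}$ by the right-hand side of
(\ref{eq.lemma.D.smooth}) using trace theorems on $\pa_N\Omega$ and
$\Gamma$. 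So the content is entirely in the regularity estimate, and
the invertibility of $\tilde P^A_m:\maD_m\to\maR_m$ follows once we
know a priori that solutions with data in $\maR_m$ lie in
$\hat H^{m+1}(\Omega)$ with the stated bound, together with the
already-established injectivity.

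For step (ii) I would localize with a finite partition of unity
$\{\phi_\ell\}$ subordinate to a cover of $\overline\Omega$ by sets of
three types: (a) interior balls contained in some $\Omega_k$, away
from $\pa\Omega$ and $\Gamma$; (b) boundary collars meeting $\pa\Omega$
but not $\Gamma$; (c) collars straddling the interface $\Gamma$ but not
meeting $\pa\Omega$ (this is exactly where the hypothesis that
$\Gamma$ is smooth \emph{and disjoint from} $\pa\Omega$, and that each
boundary component carries a single boundary condition, is used --- it
guarantees the three types suffice and no corner/junction analysis is
needed). On type (a) patches the classical interior estimate
$\|v\|_{H^{m+1}}\le C(\|P^A v\|_{H^{m-1}}+\|v\|_{H^{m}})$ applies with
$C=C(m,\|A\|_{\hat W^{m,\infty}})$. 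On type (b) patches, flatten
$\pa\Omega$ by a smooth change of coordinates; the Dirichlet case is
the standard tangential-difference-quotient argument, and the Neumann
case uses the same argument combined with the conormal condition
$\nabla^A_\nu u = g$ to control the normal derivative, reading off the
top-order normal derivative from the equation $P^A u=f$ rewritten in
the flattened coordinates. On type (c) patches, flatten $\Gamma$ so it
becomes a hyperplane separating $\Omega^+$ and $\Omega^-$; the two
transmission conditions $u^+-u^-=0$ and
$\nabla^A_\nu u^+-\nabla^A_\nu u^-=h$ together with the two equations
$P^A u=f$ in $\Omega^\pm$ form an elliptic system for the pair
$(u^+,u^-)$ to which the difference-quotient method again applies: one
differentiates tangentially freely on both sides, and the transmission
conditions let one match and then solve for the normal derivatives.
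Summing $\|\phi_\ell u\|_{H^{m+1}}^2$ over $\ell$ (with
$u=\sum_\ell\phi_\ell u$) yields
$\|u\|_{\hat H^{m+1}(\Omega)}\le C(\|f\|_{\hat H^{m-1}}
+\|g\|_{H^{m-1/2}(\pa_N\Omega)}+\|h\|_{H^{m-1/2}(\Gamma)}
+\|u\|_{H^{m}(\Omega)})$.

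Step (iii) is induction on $m$. The base case $m=1$ is the above
localization argument with the $H^m=H^1$ lower-order term controlled
by step (i): $\|u\|_{H^1(\Omega)}\le \|P^{-1}\|\,\|F\|$. For the
inductive step, suppose the estimate holds up to level $m-1$; apply
the level-$m$ localization estimate and absorb the term
$\|u\|_{H^m(\Omega)}\le\|u\|_{\hat H^m(\Omega)}$ using the
level-$(m-1)$ bound, which expresses it in terms of
$\|f\|_{\hat H^{m-2}}$, $\|g\|_{H^{m-3/2}}$, $\|h\|_{H^{m-3/2}}$ and
hence (by monotonicity of Sobolev norms on bounded sets) in terms of
the level-$m$ data. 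Carrying the constants through the finitely many
patches and the finitely many induction steps gives
$\tilde C_1=\tilde C_1(m,\|P^{-1}\|,\|A\|_{\hat W^{m,\infty}})$ as
claimed; the additive $\|u\|_{H^1(\Omega)}$ on the left of
(\ref{eq.lemma.D.smooth}) is then free from step (i).

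\medskip

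The main obstacle I anticipate is step (ii) on the type-(c) interface
patches, and more precisely the bookkeeping needed to run the
difference-quotient argument for the coupled system $(u^+,u^-)$ with
only $\hat W^{m,\infty}$ coefficients --- one must check that
tangential difference quotients of $u$ stay bounded in $H^1$ on
\emph{each} side, that the commutators of $P^A$ with difference
quotients produce only terms controlled by $\|A\|_{\hat W^{m,\infty}}$
and lower-order norms of $u$, and that the discrete transmission
conditions obtained by differencing $h$ and $u^+-u^-=0$ close the
energy estimate without loss. The low regularity of the coefficients
means one cannot differentiate the equation freely; instead the whole
argument must be carried out at the level of the weak/variational
formulation~(\ref{eq.weak}) restricted to test functions supported in
the patch, with difference quotients as test functions, so that only
first derivatives of $A$ ever appear after one integration by parts,
and the induction is set up so that at level $m$ one uses $m$
derivatives of $A$ total. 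Everything else --- the interior estimate,
the flattening diffeomorphisms, the trace inequalities, the summation
over the partition of unity --- is routine once this interface case is
handled, and I would present it briefly, referring to \cite{NS1} and
\cite{HMN} for the parallel arguments in the boundary-only case.
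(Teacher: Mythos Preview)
Your proposal is correct and follows essentially the same approach as the paper: Lax--Milgram for the base case, tangential difference quotients plus recovery of the top normal derivative from the equation at the flattened interface, partition of unity, and induction on $m$. The paper differs only in organization --- it first reduces to $g=0$, $h=0$ via an extension, then proves the model case $\Omega=\RR^d$ with flat interface $\{x_d=0\}$ fully by induction (writing the argument with genuine tangential derivatives $\pa_j$, $j<d$, and commutators $[P,\pa_j]$ rather than difference quotients) before localizing, whereas you localize first and carry the data throughout --- but the substance is identical.
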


\begin{proof} 
In the case of the pure Dirichlet boundary conditions for an equation
and without the explicit bounds in Equation (\ref{eq.lemma.D.smooth}),
this lemma is a classical result, which is proved using divided
differences and the so called ``Nirenberg's trick'' (see
\cite{Fichera,Morrey}). Since we consider transmission problems and
want the more explicit bounds in the above Equation
(\ref{eq.lemma.D.smooth}), let us now indicate the main steps to treat
the interface regularity following the classical proof and
\cite{NS1}. The boundary conditions (\ie regularity at the boundary)
were dealt with in \cite{NS1}. In all the calculations below, all the
constants $C$ in this proof will be generic constants that will depend
only on the variables on which $\tilde C_1$ depends (\ie on the order
$m$, the norms $\|P^{-1}\|$ and $\|A\|_{\hat W^{m,\infty}(\Omega)}$).
We split the proof into several steps.

{\em Step 1.}\ We first use Proposition \ref{prop.existence} to
conclude that $P : H_{D}^1(\Omega) \to H_{D}^1(\Omega )^*$ is indeed
invertible.  This provides the needed estimate for $m = 0$ (in which
case, we recall, our problem (\ref{eq.pBVP}) has to be interpreted in
a weak sense).

{\em Step 2.}\ For $m>0$ we can assume $g = 0$ and $h = 0$ by using
the extension theorem as in \cite{NS1}.

{\em Step 3.}\ We also notice that, in view of the invertibility of
$P$ for $m = 0$ and since $u \in H^1_D(\Omega)$, it suffices to prove
\begin{equation}\label{eq.p.simple}
  \|u\|_{\hat H^{m+1}(\Omega)} \, \le \, C \big( \sum_k
  \|f\|_{H^{m-1}(\Omega_k)} + \|u \|_{\hat H^{m}(\Omega )} \big) \;.
\end{equation}
Indeed, the desired inequality (\ref{eq.lemma.D.smooth}) will follow
from Equation (\ref{eq.p.simple}) by induction on $m$. Since Equation
(\ref{eq.p.simple}) holds for $P$ if, and only if, it holds for
$\lambda + P$, in order to prove Equation (\ref{eq.p.simple}), 
it is also enough to assume that $\lambda + P$ is strictly positive 
for some $\lambda \in \RR$. 
In particular, Equation
(\ref{eq.p.simple}) will continue to hold--with possibly different
constants--if we change the lower order terms of $P$.

{\em Step 4.}\ Let us assume that $\Omega = \RR^d$ with the interface
given by $\Gamma = \{x_d = 0\}$. Let $\Omega_1 = \RR^d_+$ and
$\Omega_2 = \RR^d_{-}$ be the two halves into which $\RR^d$ is divided
(so $K = 2$). Then we prove Equation (\ref{eq.p.simple}) for these
particular domains and for $g = 0$ and $h = 0$ by induction on $m$. As
we have noticed, the Equation (\ref{eq.p.simple}) is true for $m=0$,
since the stronger relation (\ref{eq.lemma.D.smooth}) is true in this
case. Thus, we shall assume that Equation (\ref{eq.p.simple}) has been
proved for $m$ and for smaller values and we will prove it for $m+1$.
That is, we want to prove
\begin{equation}\label{eq.p.simple'}
  \|u\|_{\hat H^{m+2}(\RR^d)} \, \le \, C \big ( \|f\|_{\hat
    H^{m}(\RR^d)} + \|u\|_{\hat H^{m+1}(\RR^d)} \big ).
\end{equation} 
To this end, let us first write
\begin{equation}\label{eq.p.0}
  \|u\|_{\hat H^{m+2}(\RR^d)} \le \sum_{j=1}^d \|\pa_j u\|_{\hat
    H^{m+1}(\RR^d )} + \|u\|_{L^2(\RR^d)} \;.
\end{equation}
We then use our estimate (\ref{eq.p.simple}) for $m$ (using the
induction hypothesis) applied to the function $\pa_j u$ for $j <
d$. This gives
\begin{eqnarray}
&  \| \pa_j u\|_{\hat H^{m+1}(\RR^d )} \le \|P \pa_j u\|_{\hat
    H^{m-1}(\RR^d)} \le \|\pa_j f\|_{\hat H^{m-1}(\RR^d)} + \|[P, \pa_j] u\|_{\hat
    H^{m-1}(\RR^d)}\nonumber\\
 &  \le \|f\|_{\hat H^{m}(\RR^d)} + C \|u\|_{\hat
    H^{m+1}(\RR^d)}\label{eq.p.1}
\end{eqnarray} 
since the commutator $[P, \pa_j] = P\pa_j - \pa_j P$ is an operator of
order $\le 2$ whose coefficients can be bounded in terms of
$\|A\|_{\hat W^{m,\infty}(\Omega )}$. We now only need to estimate $\|\pa_d
u\|_{H^{m+1}}$, we do that on each half subspace.
\begin{eqnarray}
&  \|\pa_d u\|_{\hat H^{m+1}(\RR^d)} \le \sum_{j=1}^d \|\pa_j \pa_d
  u\|_{\hat H^{m}(\RR^d)} + \|\pa_d u\|_{L^2(\RR^d)} \nonumber \\
 & \le \sum_{j=1}^{d-1} \|\pa_j u\|_{\hat H^{m+1}(\RR^d)} + \|\pa_d^2
  u\|_{\hat H^{m}(\RR^d)} + \| u\|_{\hat H^1(\RR^d)} .\label{eq.p.2}
\end{eqnarray}
The right hand side of the above equation contains only terms that
have already been estimated in the desired way, except for $\|\pa_d^2
u\|_{H^{m}}$.  Since $m \ge 0$, we can use the relation $P u =f$ to
estimate this term as follows. Let us write $P u = \sum \pa_i (a^{ij}
\pa_j u) + \sum b^i \pa_i u + cu$. This gives $ a^{dd} \pa_d^2 u = f -
\sum_{(i, j) \neq (d, d)} a^{ij} \pa_i \pa_j u + Qu,$ where $Q$ is a
first order differential operator. Next we notice that $a^{dd}$ is
uniformly bounded from below by the uniform strong positivity property
(which implies uniform strong ellipticity): $(a^{dd})^{-1} \le
r^{-1}$. Note that by Proposition \ref{prop.existence}, we have
$\|P^{-1}\| \le r$, and hence $r^{-1}$ is an admissible constant. This
gives $\pa_d^2 u = (a^{dd})^{-1} f - \sum_{j=1}^{d-1} B^j \pa_j u +
Q_1u$ where $B^j$ and $Q_1$ are first order differential operators
with coefficients bounded by admissible constants, which then gives
\begin{eqnarray}\label{eq.p.4} 
  \|\pa_d^2 u\|_{\hat H^{m}(\RR^d)} \le C \big (\| f\|_{\hat
    H^{m}(\RR^d)} + \sum_{j=1}^{d-1} \|\pa_j u\|_{\hat H^{m+1}(\RR^d)}
  + \|u\|_{\hat H^{m+1}(\RR^d)} \big )\\
  \le C \big ( \| f\|_{\hat H^{m}(\RR^d)} + \|u\|_{\hat
    H^{m+1}(\RR^d)} \big ) \nonumber
\end{eqnarray} 
by Equation (\ref{eq.p.1}). Equation (\ref{eq.p.2}) and (\ref{eq.p.4})
then give
\begin{equation}\label{eq.p.5}
  \| \pa_d u\|_{\hat H^{m+1}(\RR^d)} \le C \big ( \|f\|_{\hat
    H^{m}(\RR^d)} + \|u\|_{\hat H^{m+1}(\RR^d)} \big )\, .
\end{equation} 
Combining Equations (\ref{eq.p.5}) and (\ref{eq.p.1}) with Equation
(\ref{eq.p.0}) gives then the desired Equation (\ref{eq.p.simple'})
for  $m$ replaced with $m+1$.

{\em Step 5.}\ We finally reduce to the case of a half-space or a full
space using a partition of unity as in the classical case, as
follows. We choose a smooth partition of unity $(\phi_j)$ on $\Omega $
consisting of functions with small supports. The supports should be
small enough so that if the support of $\phi_j$ intersects the
boundary of $\Omega$ or the interface $\Gamma$, then the boundary or
the interface can be straightened in a small neighborhood of the
support of $\phi_j$. We arrange that the resulting operators are
positive and we complete the proof as in \cite{NS1}.
\end{proof}

See also \cite{RoitbergSh62,RoitbergSh63}.

\section{Weighted Sobolev spaces and higher regularity of 
non-parametric solutions\label{sec.three}}

We now assume $d=2$, so $\Omega$ is a plane domain. We allow however
$\Omega$ to be {\em piecewise} smooth. We also consider coefficients
with lower regularity than the ones considered in \cite{HMN}. This
leads to Theorem \ref{thm.one.op}, which will be then generalized to
families in a forthcoming paper, which will contain also full details
for the remaining results. {\em We continue to assume that we are
  dealing with a single, non-parametric equation and that $\mu =1$.}

To formulate further assumptions on our problem and to state our
results, we shall need weighted Sobolev spaces, both of $L^2$ and of
$L^\infty$ type. Let $\maV$ be the set of singular points, where $Q
\in \maV$ if one of the following is satisfied:\ (1) it is a vertex,
(2) it is a point where the boundary condition changes type (from
Dirichlet to Neumann), (3) it is a point where the interface meets the
boundary, or (4) it is a non-smooth point on the interface of the
subdomains $\Omega_k$. Let us denote from now on by $\rho : \RR^2 \to
[0, 1]$ a continuous function that is smooth outside the set $\maV$
and is such that $\rho(x)$ is equal to the distance from $x \in \RR^2$
to $\maV$ when $x$ is close to the singular set $\maV$. The function
$\rho$ will be called the {\em smoothed distance to the set of
  singular points.} We can also assume $\| \nabla \rho \| \le 1,$
which will be convenient in later estimates, since it will reduce the
number of constants (or parameters) in our estimates. We first define
the {\em Babu\v{s}ka-Kondrat'ev spaces}
\begin{eqnarray} \label{eq.def.wbroken}
    \maK^{m}_{a} (\Omega) := \{ v : \Omega \to \RR, \, \rho^{|\alpha|
      - a} \pa^{\alpha} v \in L^2(\Omega ),\; \forall \;
    |\alpha|\leq m \} \\
    \maW^{m, \infty} (\Omega ) := \{ v : \Omega \to \RR, \,
    \rho^{|\alpha|} \pa^{\alpha} v \in L^\infty(\Omega ),\; \forall \;
    |\alpha|\leq m \}.
\end{eqnarray}
We shall denote by $\| \; \cdot \; \|_{\maK^m_a(\Omega )}$ and $\| \;
\cdot \; \|_{\maW^{m, \infty}(\Omega )}$ the resulting natural norms
on these spaces. We shall need also the ``broken'' version of these
Babu\v{s}ka-Kondrat'ev spaces for our interface problem. Recall the
subdomains $\Omega_k\subset \Omega$, $1\leq k\leq K$. In analogy with
the smooth case, we then define: $\hat \maK^{m}_{a} (\Omega ) := \{ v:
\Omega \to \RR, \, v \in \maK^m_a(\Omega_k), \ \forall 1 \leq k \leq K
\}$, and $\hat \maW^{m, \infty} (\Omega ) := \{ v: \Omega \to \RR, \,
v\in \maW^{m, \infty}(\Omega_k),\ \forall 1 \leq k \leq K \}.$ If
$\maV$ is empty (that is, if the domain $\Omega$ is smooth and the
interface is also smooth and does not touch the boundary), then we set
$\rho \equiv 1$ and our spaces reduce to the broken Sobolev spaces
$\hat H^m(\Omega)$ and $\hat W^{m, \infty}$ introduced in the previous
section, Equation (\ref{eq.def.broken}). As in the smooth case, the
multiplication and differentiation maps $\hat \maW^{m, \infty} (\Omega
) \times \hat\maK^{m}_{a} (\Omega ) \to \hat\maK^{m}_{a} (\Omega )$
and $\pa_i : \hat\maK^{m}_{a} (\Omega ) \to
\tilde\maK^{m-1}_{a-1}(\Omega )$ are continuous. Let $S \subset \pa
\Omega_k$. Also as in the smooth case, we define the spaces
$\maK^{m+1/2}_{a+1/2}(S)$ as the restrictions to $S$ of the functions
$u \in \maK^{m+1}_{a+1}(\Omega_k )$. These spaces have intrinsic
descriptions \cite{AIN,MN2} similar to the usual
Babu\v{s}ka-Kondrat'ev spaces. Note that no ``hat'' is needed for the
boundary version of the spaces $\hat \maK$. Also
$\maK^{m+1/2}_{a+1/2}(S_1 \cup S_2) = \maK^{m+1/2}_{a+1/2}(S_1) \oplus
\maK^{m+1/2}_{a+1/2}(S_2)$, if $S_1$ and $S_2$ are disjoint.

We need to consider the subset $\maV_s$ of $\maV$ consisting of
Neumann-Neumann corners (\ie corners where two edges endowed with
Neumann boundary conditions meet) and non-smooth points of the
interface, which can be described as $\maV_s := \maV \smallsetminus \{
Q\in \maV, \, Q \in \overline{\pa_D\Omega}\}$. Note that, if a point
$Q$ at the intersection of the interface $\Gamma$ and the boundary
falls on an edge with Neumann boundary conditions, then $Q$ is also
included in $\maV_s$. In order to deal with the singularities arising
at the points in $\maV_s$ (which behave differently than the
singularities at the other points of $\maV$), we also need to augment
our weighted Sobolev spaces with a suitable finite-dimensional
space. Namely, for each point $Q\in\maV_s$, we choose a function
$\chi_Q\in \maC^\infty(\bar\Omega)$ that is constant equal to 1 in a
neighborhood of $Q$. We can choose these functions to have disjoint
supports. Let $W_s$ be the linear span of the functions $\chi_{Q}$ for
any $Q \in \maV_s$. We now define the domains and ranges of our
operators. Assume first that $m \ge 1$.
\begin{eqnarray*}
&  \maD_{a, m} := (\hat\maK_{a+1}^{m+1}(\Omega ) + W_s) \cap \{u=0
  {\mbox{ on }} \pa_D \Omega \} \cap  \{u^+ - u^{-} = 0 \mbox{ on
    } \Gamma \}  \\
& \maR_{a, m} := \hat \maK_{a-1}^{m-1}(\Omega) \oplus
  \maK_{a-1/2}^{m-1/2}(\pa_N \Omega) \oplus
  \maK_{a-1/2}^{m-1/2}(\Gamma).
\end{eqnarray*}
Let us observe that, by definition, the functions in $W_s$ satisfy the
interface and boundary conditions (so $W_s \subset V :=
  H^1_D(\Omega)$).
Moreover, for $a \ge 0$, we have $\maD_{a, m} =
(\hat\maK_{a+1}^{m+1}(\Omega ) + W_s) \cap H^1_D(\Omega).$ Denote $A =
(a^{ij}, b^{i}, c) \in \hat \maW^{m,\infty}(\Omega )^{d^2 + d + 1}$,
$d=2$, and $P^{A} u = \sum_{i,j=1}^{2} \pa_i(a^{ij} \pa_j u) +
\sum_{i=1}^{2} b^{i} \pa_i u + c u$, as before. Then the family of
partial differential operators $\tilde P^A_{a, m} : \maD_{a, m} \to
\maR_{a, m}$
\begin{equation}
 %
  \tilde P^A_{a, m} u = \big (Pu, \nabla^A_{\nu} u\vert_{\pa_N
    \Omega},  (\nabla^A_{\nu} u^{+}- \nabla^A_{\nu} u^{+})
    \vert_{\Gamma}  \big)
\end{equation}
is well defined and the induced map $\hat \maW^{m,\infty}(\Omega
)^{(d^2+d+1)} \ni A = (a^{ij}, b^{i}, c) \to P^{A}_{a, m} \in \maL(
\maD_{a, m},\maR_{a, m})$ is continuous (recall that $d=2$). The
continuity of this map motivates the use of the spaces $\hat
\maW^{m,\infty}(\Omega )$.

When $m=0$, we define 
\begin{eqnarray*}
 & \maD_{a,m} = \maD_{a,0} = \maK_{a+1}^1(\Omega) \cap \{u=0 {\mbox{ on
  }} \pa_D \Omega \} + W_s\\
 & \maR_{a,m} = \maR_{a,0} = (\maK_{-a+1}^1(\Omega) \cap \{u=0 {\mbox{ on
  }} \pa_D \Omega \})^*,
\end{eqnarray*}
where in the last equation the dual is defined as the dual with pivot
$L^2(\Omega )$. Then we define $\tilde P_{a,0}$ in a weak sense using
the bilinear form $B$ introduced in Equation (\ref{eq:defby}), as in
the smooth case.

Recall the constant $0 < r$ in the definition of the uniform strict
positivity (Definition \ref{def.u.positive}) and Proposition
\ref{prop.existence}. We now state the main result of this
section. Recall that $U$ is reduced to a point in this section.

\begin{theorem} \label{thm.one.op}
Assume that $A = (a^{ij}, b^{i}, c) \in \hat
\maW^{m,\infty}(\Omega)^{d^2 +d+1}$, $d=2$, and that $P^A$ is strictly
positive definite on $V = H^1_{D}(\Omega )$. Then there exists $0 <
\eta $ such that for any $m \in \NN_0$ and for any $0 < a< \eta$, the
map $\tilde P^A_{a,m} : \maD_{a,m} \to \maR_{a,m}$ is boundedly
invertible and $\|(\tilde P^A_{a, m})^{-1}\| \le \tilde C ,$ where
$\tilde C = \tilde C(m,r,a,\|A\|_{\hat \maW^{m, \infty}(\Omega )})$
depends only on the indicated variables.
\end{theorem}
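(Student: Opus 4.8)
\medskip

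\emph{Overall strategy.}\ The plan is to reduce Theorem \ref{thm.one.op} to a combination of (a) the smooth-case regularity result Theorem \ref{thm.smooth} applied on dyadic pieces away from the singular set $\maV$, (b) a standard Mellin/dilation analysis near each point of $\maV$, exactly as in the classical Kondrat'ev theory, and (c) the Lax--Milgram/Hardy-inequality argument that pins down the threshold $\eta$ and handles the augmentation by $W_s$. The invertibility is proven in two stages: first one establishes the a priori estimate $\|u\|_{\hat\maK^{m+1}_{a+1}+W_s}\le \tilde C(\|\tilde P^A_{a,m}u\|_{\maR_{a,m}} + \|u\|_{\maK^1_{a+1}})$ with $\tilde C$ of the claimed form, and then one removes the lower-order term $\|u\|_{\maK^1_{a+1}}$ by invoking the case $m=0$, where Proposition \ref{prop.existence} gives the uniform bound $\|P_y^{-1}\|\le r^{-1}$ after one checks the equivalence of the $m=0$ weighted space with $H^1_D(\Omega)$ for $0<a<\eta$.

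\medskip

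\emph{Key steps, in order.}\ First, for $m=0$: choose $\eta$ small enough that the weight $\rho^{a}$ is harmless, i.e.\ that $\maK^1_{a+1}(\Omega)\cap\{u=0\text{ on }\pa_D\Omega\}+W_s$ coincides with $H^1_D(\Omega)$ with equivalent norms; this uses a Hardy-type inequality near the vertices together with the fact that $\rho$ is bounded and $\|\nabla\rho\|\le 1$. Then $\tilde P^A_{a,0}$ is boundedly invertible with norm $\le r^{-1}$ by Proposition \ref{prop.existence}. Second, for $m\ge 1$, reduce to $g=0$, $h=0$ by a weighted extension theorem (the analogue of Step 2 in the proof of Theorem \ref{thm.smooth}, now in $\maK$-spaces). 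Third, localize with a partition of unity subordinate to a cover of $\overline\Omega$ by sets that are either (i) away from $\maV$, or (ii) small neighborhoods of a single point $Q\in\maV$. On the type-(i) pieces the weight $\rho$ is comparable to a constant, so the commutator estimates and the interface/boundary regularity are exactly those of Theorem \ref{thm.smooth}; this gives the desired bound there with a constant of the right shape. Fourth, on a type-(ii) piece near $Q\in\maV$, use the dilation/dyadic-decomposition trick characteristic of Babu\v{s}ka--Kondrat'ev spaces: cover the punctured neighborhood by annuli $A_j=\{2^{-j-1}\le\rho\le 2^{-j}\}$, rescale each $A_j$ to a fixed annulus of size $O(1)$, apply the smooth-case estimate Theorem \ref{thm.smooth} on the rescaled (frozen-coefficient) problem — whose ellipticity constants are uniform in $j$ because $A\in\hat\maW^{m,\infty}$ means $\rho^{|\alpha|}\pa^\alpha a^{ij}\in L^\infty$ — and reassemble using the $\rho^{|\alpha|-a}$ weights; summing the geometric series in $j$ produces the $\hat\maK$-estimate on that piece. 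Fifth, the contribution of the cut-off functions and the singular functions $\chi_Q\in W_s$ is absorbed: since each $\chi_Q$ is locally constant near $Q$, $P^A\chi_Q$ is supported away from $Q$ and lies in $\hat\maK^{m-1}_{a-1}$, so adding $W_s$ changes the estimate only by admissible constants. Assemble the local estimates to get the global a priori bound with lower-order term, then finish by the $m=0$ step and a standard ``invertible $=$ injective $+$ a priori estimate'' argument (injectivity is immediate from the $m=0$ invertibility, since $\maD_{a,m}\subset H^1_D(\Omega)$).

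\medskip

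\emph{Main obstacle.}\ The delicate point is the determination of $\eta$ and the uniformity of the annular estimates near the Neumann--Neumann corners and the non-smooth interface points, i.e.\ the points of $\maV_s$. At those points $0$ is an eigenvalue-type value of the associated Mellin symbol (the constant function is harmonic and satisfies the homogeneous conormal conditions), which is precisely why one must enlarge the domain by $W_s$; $\eta$ has to be chosen below the first positive element of the spectrum of the model operator at each point of $\maV$, and one must verify that after subtracting the $W_s$-component the remainder genuinely lives in $\maK^{m+1}_{a+1}$ with the correct decay. Keeping track of exactly which constants are ``admissible'' — i.e.\ depend only on $m$, $r$, $a$, and $\|A\|_{\hat\maW^{m,\infty}}$ and not, say, on lower-order coefficients individually — while doing the freezing-of-coefficients on each annulus is the bookkeeping that makes the argument long; this is the reason the authors defer the full details to the forthcoming paper and the reason we only sketch it here, relying on Theorem \ref{thm.smooth} as the engine on each rescaled piece.
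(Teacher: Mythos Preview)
The paper does not actually prove Theorem \ref{thm.one.op}. It is stated as a result ``recapitulated'' from \cite{BNZ1,BNZ3D2,HMN}, with the explicit caveat that the extension to the lower-regularity coefficients $A\in\hat\maW^{m,\infty}$ and the remaining details are deferred to a forthcoming paper. So there is no in-paper proof to compare your proposal against.

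That said, your sketch is a faithful reconstruction of the standard argument in those references: Lax--Milgram plus Hardy-type identification $\maK^1_1\cap\{u|_{\pa_D\Omega}=0\}+W_s = H^1_D(\Omega)$ for the base case, localization and dyadic rescaling near each $Q\in\maV$ to reduce to the unweighted smooth-case estimate, and the Kondrat'ev/Mellin analysis to identify $\eta$ and the role of $W_s$ at Neumann--Neumann and interface vertices. Two technical points deserve care. First, after rescaling an annular sector near a vertex you do not obtain a domain satisfying the hypotheses of Theorem \ref{thm.smooth} (its artificial circular boundaries meet the straight lateral edges at corners); the usual remedy is either to use overlapping annuli so that the circular arcs are interior, or to pass to polar coordinates $r=e^t$ and invoke translation-invariant estimates on an infinite strip, which is the route taken in \cite{BNZ1,HMN}. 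Second, ``injective $+$ a priori estimate'' gives only closed range; surjectivity for $a\neq 0$ is obtained in the cited papers via a Fredholm/homotopy argument in $a$ (conjugating by $\rho^{a}$ and using invertibility at $a=0$), not directly from the estimate. Neither point is a genuine gap in your strategy, but both are places where your wording oversells what Theorem \ref{thm.smooth} alone delivers.
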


A more typical formulation is given in the following corollary.

\begin{corollary} \label{cor.one.op}
We use the notation and the assumptions of Theorem \ref{thm.one.op}.
If $f \in \hat \maK^{m-1}_{a-1}(\Omega)$, $g \in
\maK^{m-1/2}_{a-1/2}(\pa_N \Omega)$, and $h \in
\maK^{m-1/2}_{a-1/2}(\Gamma)$, then the solution $u \in H^1_D(\Omega)$
of Problem (\ref{eq.pBVP}) can be written $u =u_r+u_s$, with $u_r \in
\hat \maK^{m+1}_{a+1}(\Omega )$ and $u \in W_s$, such that
\begin{eqnarray*}
  \|u_r \|_{\hat\maK^{m+1}_{a+1}} + \|u_s\|_{L^2} \, \le \, \tilde C
  \big ( \|f\|_{\hat \maK^{m-1}_{a-1}} +
  \|g\|_{\maK^{m-1/2}_{a-1/2}(\pa_N \Omega )} +
  \|h\|_{\maK^{m-1/2}_{a-1/2}(\Gamma)} \big ),
\end{eqnarray*}
with $\tilde C$ as in Theorem \ref{thm.one.op}. 
\end{corollary}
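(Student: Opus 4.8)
The plan is to obtain Corollary \ref{cor.one.op} as a direct translation of Theorem \ref{thm.one.op} into the ``variational'' language, by unwinding the definitions of $\maD_{a,m}$ and $\maR_{a,m}$. First, given $f \in \hat \maK^{m-1}_{a-1}(\Omega)$, $g \in \maK^{m-1/2}_{a-1/2}(\pa_N \Omega)$, and $h \in \maK^{m-1/2}_{a-1/2}(\Gamma)$, the triple $(f,g,h)$ is, by the definition of $\maR_{a,m}$ for $m \ge 1$, an element of $\maR_{a,m}$ whose norm is exactly the sum of the three norms appearing on the right of the claimed estimate (for $m = 0$ those three norms still make sense and control the norm of the corresponding functional in $\maR_{a,0}$ via the trace and extension properties of the $\maK$-spaces). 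Theorem \ref{thm.one.op} then produces a unique $v \in \maD_{a,m}$ with $\tilde P^A_{a,m} v = (f,g,h)$ and $\|v\|_{\maD_{a,m}} \le \tilde C \, \|(f,g,h)\|_{\maR_{a,m}}$, with $\tilde C = \tilde C(m,r,a,\|A\|_{\hat \maW^{m,\infty}(\Omega)})$.

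Second, I would identify $v$ with the weak solution $u \in H^1_D(\Omega)$ of Problem (\ref{eq.pBVP}). Shrinking $\eta$ if necessary, for $0 < a < \eta$ the data spaces embed continuously into $H^{-1}_D(\Omega)$, so $(f,g,h)$ determines an admissible right-hand side for the weak formulation (\ref{eq.weak}), and, as recorded in Section \ref{sec.three}, $\maD_{a,m} \subset \maD_{a,0} \subset H^1_D(\Omega)$ for $a \ge 0$. The operator $\tilde P^A_{a,m}$ is, by construction, compatible with the weakly-defined operator $\tilde P^A_{a,0}$: if $v \in \maD_{a,m}$ then $v \in \maD_{a,0}$ and $\tilde P^A_{a,0} v$ is the image of $\tilde P^A_{a,m} v$ under the natural continuous map $\maR_{a,m} \to \maR_{a,0}$ (inclusion of the interior datum together with the boundary and interface functionals). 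Hence $\tilde P^A_{a,m} v = (f,g,h)$ forces $v$ to satisfy (\ref{eq.weak}); since the weak solution is unique by Proposition \ref{prop.existence}, we conclude $v = u$.

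Third, I would read off the decomposition. For $m \ge 1$ we have $\maD_{a,m} = (\hat \maK^{m+1}_{a+1}(\Omega) + W_s) \cap H^1_D(\Omega)$, so $u = v = u_r + u_s$ with $u_r \in \hat \maK^{m+1}_{a+1}(\Omega)$ and $u_s \in W_s$ (the $m = 0$ case being identical with $\maK^1_{a+1}$ in place of $\hat \maK^{m+1}_{a+1}$). This decomposition is canonical because $\hat \maK^{m+1}_{a+1}(\Omega) \cap W_s = \{0\}$ for $a > 0$: an element of $W_s$ equals a constant $c_Q$ in a neighborhood of each $Q \in \maV_s$, and for $d = 2$ a nonzero constant near a point is not in $\maK^{m+1}_{a+1}$ because $\rho^{-(a+1)}$ fails to be square-integrable near $Q$; hence all $c_Q = 0$. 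Therefore the sum $\hat \maK^{m+1}_{a+1}(\Omega) + W_s$ is direct, its natural norm is $\|u_r\|_{\hat \maK^{m+1}_{a+1}} + \|u_s\|_{W_s}$ with $\|\cdot\|_{W_s}$ any fixed norm on the finite-dimensional space $W_s$ (in particular $\|u_s\|_{L^2} \le C \|u_s\|_{W_s}$), and $\|v\|_{\maD_{a,m}}$ is equivalent to $\|u_r\|_{\hat \maK^{m+1}_{a+1}} + \|u_s\|_{L^2}$. Combining this equivalence with the bound from Theorem \ref{thm.one.op} and the identity $v = u$ gives precisely the asserted inequality, $\tilde C$ absorbing the (fixed, $a$-, $m$-dependent) norm-equivalence constants.

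The genuinely structural step, and hence the one to get right, is the second: the identification of the abstract solution $v = (\tilde P^A_{a,m})^{-1}(f,g,h)$ with the weak solution $u$ of (\ref{eq.pBVP}). This rests on the compatibility of the entire scale $(\tilde P^A_{a,m})_{m \ge 0}$ with the variational operator $\tilde P^A_{a,0}$ together with the uniqueness in Proposition \ref{prop.existence}; everything else — the continuity of the embeddings of the data spaces for small $a$, the equivalence of norms on $W_s$, and the directness of the sum $\hat \maK^{m+1}_{a+1}(\Omega) + W_s$ — is routine bookkeeping. It is worth recording these compatibility statements explicitly as part of the construction of $\tilde P^A_{a,m}$.
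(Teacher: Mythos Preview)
Your proposal is correct and matches the paper's intent: the paper states the corollary without proof, treating it as an immediate reformulation of Theorem \ref{thm.one.op}, and your argument spells out precisely the routine unwinding of the definitions of $\maD_{a,m}$ and $\maR_{a,m}$ (directness of the sum $\hat\maK^{m+1}_{a+1}(\Omega) + W_s$ for $a>0$, norm equivalence on the finite-dimensional $W_s$, and identification of $(\tilde P^A_{a,m})^{-1}(f,g,h)$ with the weak solution via Proposition \ref{prop.existence}) that makes this immediate.
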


\section{Applications\label{sec.fam.reg}}

We keep the settings and notations of the previous section. 
In particular, $d=2$ and we are dealing with equations (not systems).
One can proceed as in \cite{Schwab1,Schwab2,NS1} to obtain
$h^m$-quasi-optimal rates of convergence for the Galerkin $u_n$
approximations of $u$. Namely, under suitable additional regularity in
the $y \in U$ variable one can construct a sequence of finite
dimensional subspaces $S_n \subset L^2(U; V)$ such that
\begin{equation}
  \|u-u_n\|_{L^2(U; V)} \le C \dim(S_n)^{-m/2}\|f\|_{H^{m-1}(\Omega)}.
\end{equation}
This is based on a holomorphic regularity in $U$ and on the
approximation properties in \cite{HMN}. We now state a uniform shift
theorem for our families of boundary value/transmission problems.

Let us denote by $\mcb^{k}(U; Z)$ the space of $k$-times boundedly
differentiable functions defined on $U$ with values in the Banach
space $Z$. By $\mcb^{\omega}(U; Z)$ we shall denote the space of {\em
  analytic} functions with bounded derivatives defined on $U$ with
values in the Banach space $Z$. Recall that $r$ is the constant
appearing in the definition of uniform positivity of the family
\famP. Theorem \ref{thm.one.op} extends to families of boundary value
problems as in \cite{NS1} as follows.  Let us denote by $\eta(y)$ the
best constant appearing in Theorem \ref{thm.one.op} for $P = P_y$ and
$\eta = \inf_{y\in U} \eta(y)$.

\begin{theorem}  \label{thm.fam.op}
Let $m \in \NN_0$ and $k_0 \in \NN_0 \cup \{\infty,\, \omega\}$ be
fixed. Assume that $A = (a^{ij}, b^i, c) \in \mcb^{k_0}(U; \hat
\maW^{m,\infty}(\Omega ))^{d^2+d+1}$, $d=2$, and that the family
$P^A_y$ is uniformly positive definite. Then $\eta = \inf_{y\in U}
\eta(y) > 0$.  Let $f \in \mcb^{k_0}(U; \hat \maK^{m-1}_{a-1}(\Omega
))$, $g \in \mcb^{k_0}(U; \maK^{m-1/2}_{a-1/2}(\pa_N \Omega ))$, $h
\in \mcb^{k_0}(U; \maK^{m-1/2}_{a-1/2}(\Gamma))$, and $0 < a < \eta$.
Then the solution $u$ of our family of boundary value problems
(\ref{eq.pBVP}) satisfies $u \in \mcb^{k_0}(U; \maD_{a,m}))$.
Moreover, for each finite $k \le k_0$, there exists a constant
$C_{a,m} > 0$ such that
\begin{eqnarray*}
  \|u \|_{\mcb^{k}(U; \maD_{a,m})} \, \le \, && C_{a,m} \big(
  \|f\|_{\mcb^{k}(U; \hat \maK^{m-1}_{a-1}(\Omega))} \\
&&  + \|g\|_{\mcb^{k}(U; \maK^{m-1/2}_{a-1/2}(\pa_N \Omega ))} +
  \|h\|_{\mcb^{k}(U; \maK^{m-1/2}_{a-1/2}(\Gamma))} \big) \;.
\end{eqnarray*}
The constant $C_{a,m}$ depends only on $r$, $m$, $a$, $k$, and the
norms of the coefficients $a^{ij}, b^{i}, c$ in $\mcb^{k}(U; \hat
\maW^{m,\infty}(\Omega ))$, but not on $f$ or $g$.
\end{theorem}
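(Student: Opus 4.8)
The plan is to bootstrap from the single-operator result Theorem~\ref{thm.one.op} to the parametric family by differentiating the equation $P^A_y u_y = (f_y,g_y,h_y)$ repeatedly in $y$ and controlling the resulting commutator terms, exactly in the spirit of \cite{NS1}. First I would record that, since $A \in \mcb^{k_0}(U;\hat\maW^{m,\infty}(\Omega))^{d^2+d+1}$ and the family is uniformly positive definite, the constant $r$ of Definition~\ref{def.u.positive} is uniform in $y$; combined with Theorem~\ref{thm.one.op} applied pointwise (whose constant $\tilde C$ depends only on $m$, $r$, $a$, and $\|A\|_{\hat\maW^{m,\infty}}$, all of which are bounded uniformly over $U$ by hypothesis), this gives $\eta = \inf_{y}\eta(y) > 0$ and a uniform bound $\|(\tilde P^A_{a,m})_y^{-1}\| \le \tilde C$ for every $y \in U$. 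Fix $0 < a < \eta$ from now on.

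Next I would establish the case $k = 0$, i.e.\ $u \in \mcb^{0}(U;\maD_{a,m})$ with the stated estimate: this is essentially the pointwise statement of Theorem~\ref{thm.one.op} together with continuity of $y \mapsto (f_y,g_y,h_y)$ and of $y \mapsto (\tilde P^A_{a,m})_y^{-1}$ (the latter following from continuity of $A \mapsto P^A_{a,m}$ noted in Section~\ref{sec.three} and the standard fact that inversion is continuous on invertible operators with uniformly bounded inverses). Then I would proceed by induction on $k$. Assuming $u \in \mcb^{k-1}(U;\maD_{a,m})$, one formally differentiates $P^A_y u_y = f_y$ (and the boundary/interface relations) $\alpha$ times in $y$, $|\alpha| \le k$, using the Leibniz rule, to obtain
\begin{equation*}
  (\tilde P^A_{a,m})_y \, \pa_y^{\alpha} u_y \; = \; \pa_y^{\alpha}(f_y,g_y,h_y) \; - \; \sum_{0 \neq \beta \le \alpha} \binom{\alpha}{\beta} (\pa_y^{\beta} \tilde P^A_{a,m})_y \, \pa_y^{\alpha - \beta} u_y .
\end{equation*}
Each term $(\pa_y^{\beta}\tilde P^A_{a,m})_y$ is again a transmission operator of the same form with coefficients $\pa_y^{\beta} A \in \mcb^{k_0 - |\beta|}(U;\hat\maW^{m,\infty})$, so it maps $\maD_{a,m} \to \maR_{a,m}$ with norm controlled by $\|\pa_y^\beta A\|_{\hat\maW^{m,\infty}}$; applying the uniform inverse bound and the induction hypothesis on $\pa_y^{\alpha-\beta}u$ (which lives in $\mcb^{k-|\beta|} \subset \mcb^{k-1}$) bounds the right-hand side, hence $\pa_y^{\alpha} u_y$, in the $\maD_{a,m}$-norm uniformly in $y$. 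Summing over $|\alpha| \le k$ and absorbing combinatorial constants into $C_{a,m} = C_{a,m}(r,m,a,k,\|A\|_{\mcb^k(U;\hat\maW^{m,\infty})})$ yields the estimate; a standard difference-quotient argument upgrades the formal differentiation to genuine $\mcb^k$-regularity. The analytic case $k_0 = \omega$ follows by tracking the growth of $C_{a,m}$ in $k$: the Leibniz recursion produces at most geometric growth of the form $C^{k} k!$ in the derivative bounds when $A$ is analytic with bounded derivatives, which is precisely the condition for $u \in \mcb^{\omega}(U;\maD_{a,m})$, again as in \cite{NS1,Schwab1}.

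The main obstacle is the bookkeeping in the inductive commutator estimate — specifically, verifying that each differentiated operator $\pa_y^\beta \tilde P^A_{a,m}$ genuinely maps the \emph{augmented} broken space $\maD_{a,m} = (\hat\maK^{m+1}_{a+1}(\Omega) + W_s) \cap H^1_D(\Omega)$ boundedly into $\maR_{a,m}$, since the summand $W_s$ of locally constant functions interacts with the lower-order coefficients $b^i, c$ (the leading part annihilates constants, but $\pa_y^\beta(b^i\pa_i + c)$ does not), and one must check these contributions land in $\hat\maK^{m-1}_{a-1}(\Omega)$ with the right weights. This is handled by the continuity of $A \mapsto P^A_{a,m}$ already asserted in Section~\ref{sec.three}, so no new analysis is needed, but it is the point where the augmentation by $W_s$ must be reconciled with the Leibniz expansion. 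A secondary, purely technical point is justifying the interchange of $\pa_y$ with the (nonlocal) traces defining $g,h$ on $\pa_N\Omega$ and $\Gamma$, which is routine since those trace maps are bounded and $y$-independent. Full details, being entirely parallel to \cite{NS1}, will appear in the forthcoming paper.
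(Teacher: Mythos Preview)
Your proposal is correct and matches the paper's approach: the paper does not give a detailed proof of Theorem~\ref{thm.fam.op} but simply states that ``Theorem~\ref{thm.one.op} extends to families of boundary value problems as in \cite{NS1},'' and your Leibniz-rule/induction-in-$k$ bootstrap is precisely that extension. Your sketch is in fact more detailed than what the paper provides, and the two potential obstacles you flag (the $W_s$ summand under $\pa_y^\beta \tilde P^A_{a,m}$ and the commutation of $\pa_y$ with the trace maps) are the right ones to check.
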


\def\cprime{$'$} \def\ocirc#1{\ifmmode\setbox0=\hbox{$#1$}\dimen0=\ht0
  \advance\dimen0 by1pt\rlap{\hbox to\wd0{\hss\raise\dimen0
  \hbox{\hskip.2em$\scriptscriptstyle\circ$}\hss}}#1\else {\accent"17 #1}\fi}

\end{document}